\renewcommand{\epsilon}{\varepsilon}
\newcommand{\R}{\mathbb{R}}
\newcommand{\dd}{\,d}
\newcommand{\nl}{\left\lVert}
\newcommand{\nr}{\right\rVert}
\newcommand{\M}{\mathcal{M}}
\newcommand{\product}[1]{\mu_{\M}}
\renewcommand{\and}{\hspace{8pt} \text{and} \hspace{8pt}}
\DeclareMathOperator{\id}{id.}
\DeclareMathOperator{\im}{im}
\newtheorem{theorem}{Theorem}
\newtheorem{proposition}{Proposition}
\newtheorem{lemma}{Lemma}
\theoremstyle{definition}
\theoremstyle{remark}
\newtheorem*{remark}{Remark}
\begin{document}

\title[A Further Remark on Sobolev Spaces. The Case $0<p<1$]{A Further Remark on Sobolev Spaces. \\The Case $0<p<1$}
\author{Aron Wennman}
\address{Department of Mathematics, KTH Royal Institute for Technology, Stockholm}
\email{aronw@maths.kth.se}
\date{\today}

\subjclass[2010]{Primary 46E35}
\keywords{Sobolev Spaces, Small exponents}

\begin{abstract}
We discuss a phenomenon observed by Jaak Peetre in the seventies: for small $L^{p}$-exponents, i.e.~for $0<p<1$, the Sobolev spaces $W^{k,p}$ defined in a seemingly natural way are isomorphic to $L^{p}$. 
This implies in particular that the dual of $W^{k,p}$ is trivial, and indicates that these spaces are highly pathological. 
In this note we expand on Peetre's observation,
explaining in detail some points that might merit further discussion.
\end{abstract}
\maketitle

\section{Introduction}

In \cite{Peetre}, Jaak Peetre studies naturally defined Sobolev spaces for small exponents, i.e.~$W^{k,p}$ for $0<p<1$. 
His main result is that these Sobolev spaces, defined as abstract completions of $C^{\infty}$, are far too large to be useful objects. 
More precisely the statement is that
$$
W^{k,p}\cong L^{p}\oplus L^{p}\oplus \ldots \oplus L^{p}\cong L^{p},
$$
where the isomorphisms involved are all topological. The last isomorphism is a well known fact, and by a classical theorem of Day \cite{Day} which characterizes the dual of $L^p$, it thus holds that the dual of $W^{k,p}$ is trivial. 
The proof of the main result in \cite{Peetre}, however, is based on a general algebraic lemma which is not entirely clear. 
For this reason we revisit the proof in the present note. The author wishes to point out that although he believes some of the ideas presented here to be his own, it is by no means a new proof; it is more of a re-examination of exposition and an additional lemma that seems to be needed in one form or another. 
It also turns out that one may fix many of the unclear points, especially in the $W^{1,p}$-case, by employing ideas due to Douady and due to Peetre, which are all discussed in \cite{Peetre}. 
The proof of surjectivity of the map $\delta$ defined below is probably the main contribution made here.
The contents of this note have are contained in the joint work \cite{HWLp} with Hedenmalm, concerning what we call $L^p$-Carleman classes.

We restate the result under consideration for easy reference.

\begin{theorem}[Peetre, \cite{Peetre}]\label{Peetre-main} We have $W^{k,p}\cong L^{p}\oplus W^{k-1, p}$.
\end{theorem}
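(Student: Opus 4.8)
The plan is to work with the concrete realization of $W^{k,p}$ as the completion of $C^{\infty}$ (say on $\T$; the general case is analogous) under the quasi-norm $\|f\|_{W^{k,p}}^{p}=\sum_{|\alpha|\le k}\|\partial^{\alpha}f\|_{L^{p}}^{p}$, and to peel off the top-order part directly, building the splitting section by hand rather than invoking an abstract splitting principle. Two continuous linear maps are central: the ``forget the top derivatives'' contraction $\q\colon W^{k,p}\to W^{k-1,p}$, which is the identity on $C^{\infty}$ and extends by continuity; and the ``record the top derivatives'' map $\delta\colon W^{k,p}\to\bigoplus_{|\alpha|=k}L^{p}$, given on $C^{\infty}$ by $f\mapsto(\partial^{\alpha}f)_{|\alpha|=k}$ and again extended by continuity; recall $\bigoplus_{|\alpha|=k}L^{p}\cong L^{p}$. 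Everything hinges on the identity
\[
\|f\|_{W^{k,p}}^{p}=\|\q f\|_{W^{k-1,p}}^{p}+\|\delta f\|^{p},
\]
which holds on $C^{\infty}$ and hence, by density and continuity of both sides, on all of $W^{k,p}$; in particular $\delta$ is isometric on $\ker\q$, and $\ker\q\cap\ker\delta=\{0\}$.

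The core of the argument — and the only place where $0<p<1$ is genuinely used — is the following approximation lemma, to be proved by an explicit construction, which amounts to surjectivity of $\delta$: for every smooth tuple $w=(w_{\alpha})_{|\alpha|=k}$ and every $\epsilon>0$ there is $\eta\in C^{\infty}$ with $\|\partial^{\beta}\eta\|_{L^{p}}<\epsilon$ for all $|\beta|\le k-1$ and $\|\partial^{\alpha}\eta-w_{\alpha}\|_{L^{p}}<\epsilon$ for all $|\alpha|=k$. In words, the lower-order derivatives can be made $L^{p}$-negligible while the top-order ones are prescribed arbitrarily. This is the step I expect to be the main obstacle. The construction is of sawtooth/spike type: tile the domain by cubes of small side $\ell$, let $\eta$ agree on each cube, up to a negligible error, with a local polynomial of degree $k$ realizing the prescribed top-order derivatives there — the lower-order derivatives of such a polynomial being automatically $O(\ell)$ on the cube — and reconcile the resulting jumps of $\eta$ and its low-order derivatives across cube faces by thin transition layers of width $\epsilon$. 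The decisive point is that flattening a jump across such a layer costs, in the single derivative thereby affected, an $L^{p}$-amount which for fixed $\ell$ tends to $0$ as $\epsilon\to0$, precisely because the exponent $1-p$ is positive; so one first chooses $\ell$ small, making the bulk low-order contributions small, and then, given $\ell$, chooses $\epsilon$ small. For $k=1$ this reduces to a sawtooth construction already present in \cite{Peetre} (due to Douady and to Peetre); the work here is to carry it through for higher order and for mixed partials.

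Granting the lemma, the remainder is essentially formal. First, $\delta$ restricts to an isometric isomorphism of $\ker\q$ onto $\bigoplus_{|\alpha|=k}L^{p}$: for a smooth tuple $w$, the functions $\eta_{n}$ furnished by the lemma with $\epsilon=1/n$ are Cauchy in $W^{k,p}$ by the norm identity, their limit lies in $\ker\q$, and $\delta$ sends it to $w$; since smooth tuples are dense and the isometric image of a complete space is closed, $\delta(\ker\q)$ is everything. Hence $\ker\q\cong L^{p}$. Second, $\q$ is onto: given $\xi\in W^{k-1,p}$ represented by a $W^{k-1,p}$-Cauchy sequence $(g_{n})\subset C^{\infty}$, use the lemma to cancel the top-order derivatives of $g_{n}$ up to error $1/n$ while disturbing the lower ones by at most $1/n$; the corrected sequence is then Cauchy in $W^{k,p}$, still represents $\xi$ in $W^{k-1,p}$, and its limit $f$ satisfies $\q f=\xi$ and $\delta f=0$. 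Because $\ker\q\cap\ker\delta=\{0\}$ such an $f$ is unique, so $\xi\mapsto f$ defines a linear map $s\colon W^{k-1,p}\to W^{k,p}$ with $\q\circ s$ the identity on $W^{k-1,p}$ and $\delta\circ s=0$, which the norm identity shows to be isometric. Finally $f\mapsto(f-s\q f,\ \q f)$ and $(v,\xi)\mapsto v+s\xi$ are mutually inverse continuous linear maps, whence
\[
W^{k,p}\cong\ker\q\oplus W^{k-1,p}\cong L^{p}\oplus W^{k-1,p},
\]
as claimed.
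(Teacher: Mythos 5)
Your reduction of the theorem to the approximation lemma is sound, and it is a genuinely different route from the paper's: you split off the \emph{top-order} derivative (your $\delta$ onto $L^{p}$, with the section built from the lemma and the forgetful map $\q$), whereas the paper splits off the \emph{zeroth-order} part, using $\alpha f=\lim f_j$ and $\delta f=\{f_j'\}$ together with Peetre's retraction $\beta:L^{p}\to W^{k,p}$ (with $\alpha\circ\beta=\id$, $\delta\circ\beta=0$) and proving that the first-derivative map is onto by a mean-zero adjustment, one integration, and a correction of the primitives by $\beta$. The trouble is that your key lemma --- the one place where $0<p<1$ enters, and, as you say yourself, the main obstacle --- is not established by the construction you sketch. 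The estimate ``flattening a jump across a layer of width $\epsilon$ costs, in the single derivative thereby affected, about $J^{p}\epsilon^{1-p}$'' is the $k=1$ computation and applies only to the jump of the $(k-1)$st derivative. For a jump of size $J$ in a derivative of order $j\le k-2$ (in particular the value jump, $j=0$, of size $J\approx\ell^{k}$), the corrector must remain $C^{k-1}$, so for any straightforward (polynomial or mollified) transition profile its $k$th derivative is forced to be of size about $J\epsilon^{-(k-j)}$ on a set of measure $\epsilon$; the contribution to $\|\eta^{(k)}-w\|_{p}^{p}$ is then about $J^{p}\epsilon^{1-(k-j)p}$ per face, and with $J\approx\ell^{k-j}$ and about $\ell^{-1}$ faces per unit length the total is of order $(\epsilon/\ell)^{1-(k-j)p}$, which blows up as $\epsilon\to0$ whenever $(k-j)p\ge1$. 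So your order of quantifiers (first $\ell$ small, then $\epsilon$ small) fails for all $p\ge 1/k$, i.e., in most of the range the theorem is supposed to cover.

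The lemma itself is true, but to prove it along your lines you need an extra idea inside each transition layer: the corrector's top derivative must itself be taken as tall thin spikes of a third width $\sigma\ll\epsilon$, placed so as to satisfy the moment conditions encoding the jet mismatch (for $k=2$ and a value jump $J$, two spikes of mass $\pm J/\epsilon$ and width $\sigma$ cost about $(J/\epsilon)^{p}\sigma^{1-p}\to0$ as $\sigma\to0$); in other words you must rerun the Douady mechanism at every interface and every intermediate order, and none of that is in the sketch --- it is precisely the content that makes the higher-order case nontrivial. The alternative, which is the paper's choice, is not to build the top-order section at all: quote Peetre's retraction $\beta$ once (Lemma~\ref{retraction}), prove surjectivity of the first-derivative map $\delta:W^{k,p}\to W^{k-1,p}$ by integrating a mean-zero representative and correcting the divergent primitives with $\beta$ (Lemma~\ref{deltalemma}), and take $f\mapsto(\alpha f,\delta f)$ as the isomorphism; the $0<p<1$ pathology is then consumed only through $\beta$ and the induction on $k$, rather than re-derived by hand at order $k$. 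Either import that machinery, or supply the multiscale/spike construction for your lemma; as written, the crucial step is unproved for $p\ge1/k$.
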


The theorem will be proven using a sequence of lemmas in Section~\ref{correction}. First, we repeat some useful definitions.

\section{Definitions}
As alluded to above, we define $W^{k,p}$ for functions on an interval $I\subseteq\R$, which may very well be the entire line, as the abstract completion of functions in $C^{\infty}_{0}(\R)$ restricted to $I$, with respect to the Sobolev norm
\begin{equation}\label{sobolevnorm}
\nl f\nr_{W^{k,p}}=\left(\lVert f\rVert_{p}^{p}+\lVert f'\rVert_{p}^{p}+\ldots+\lVert f^{(k)}\rVert_{p}^{p}\right)^{1/p}, \quad f\in C_0^{\infty}(\R).
\end{equation}
The elements of $W^{k,p}$ are so-called fundamental sequences $\{f_j\}_{j\geq0}$, that is, Cauchy sequences in the above norm where two sequences $\{f_j\}$ and $\{g_j\}$ are considered equal if 
$$
\nl f_j-g_j\nr_{W^{k,p}}\to0,\quad j\to\infty.
$$
We define two canonical mappings on $W^{k,p}$. 
Note that if $\{f_j\}\in W^{k,p}$ then $\{f_j\}$ is Cauchy in $L^p$-norm, and $\{f_j'\}$ is Cauchy in $W^{k-1,p}$. 
For $f=\{f_j\}\in W^{k,p}$ we can thus define mappings by $\alpha f=\lim f_j$ and $\delta f=\{f_j'\}$, where the limit involved is taken in $L^p$.

We will at times find it necessary to use indices to distinguish between the mappings \mbox{$\beta=\beta_k: L^{p}\to W^{k,p}$} for different $k$. 
Similarily we denote $\delta$ by $\delta_k$ when necessary.
 
As is the case for $p\geq1$, the mappings $\alpha$ and $\delta$ are both linear and continuous. 
One would perhaps expect that $\alpha$ is also injective, but as was originally shown by Douady who constructed a counter-example, this is not the case. 
There does not seem to be any good original reference, but the Douady example is discussed in detail in \cite{Peetre}. The example says that if the elements of $W^{k,p}$ could be seen as honest to God functions, there would exist a function which is zero but with derivative equal to one!

\section{Addendum to Peetre's proof}\label{correction}

We begin by a result ensuring that we can construct a continuous retraction \mbox{$\beta:L^p\hookrightarrow W^{k,p}$} of the mapping $\alpha$. 
This is due to Peetre and is explained in his paper \cite{Peetre}.

\begin{lemma}\label{retraction} 
There exist a retraction $\beta:L^p\hookrightarrow W^{k,p}$ that is linear, continuous, injective and satisfies 
$$
\alpha \circ \beta = \id \and \delta \circ \beta = 0.
$$
\end{lemma}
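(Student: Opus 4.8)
The plan is to build $\beta$ by smoothing and ``killing derivatives'' via a mollification trick. The key idea, due to Peetre, is that for $0<p<1$ one can dilate so aggressively that all derivative terms in the Sobolev norm become negligible. Concretely, given $g\in C_0^\infty(\R)$ and a scale $\lambda>1$, consider the function $g_\lambda(x)=g(\lambda x)$. Then $\|g_\lambda\|_p^p=\lambda^{-1}\|g\|_p^p$, while $\|g_\lambda^{(j)}\|_p^p=\lambda^{jp-1}\|g^{(j)}\|_p^p$. Since $0<p<1$, the exponent $jp-1<0$ for $j\geq 1$ precisely when $p<1/j$; but we need this for all $j\le k$, so instead one partitions the real line into many small intervals and translates rescaled bumps, exploiting that $\ell$ disjoint translates of a fixed bump scaled by $1/\ell$ have $L^p$-``norm'' (to the $p$) equal to $\ell\cdot\ell^{-1}=1$ for the function itself but $\ell\cdot\ell^{-(1-jp)}=\ell^{jp}\to\infty$... which is the wrong direction. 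The correct device is the reverse: replacing a single bump by $\ell$ disjoint translates each of height scaled so the $L^p$ quasinorm is preserved makes the derivatives \emph{blow up}, so one runs it the other way to make derivatives \emph{shrink} — this is exactly the non-convexity of $\|\cdot\|_p^p$ that fails for $p\ge1$.

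First I would fix a countable dense subset, or rather work directly: for $f\in L^p$, approximate $f$ in $L^p$ by a sequence $\varphi_n\in C_0^\infty$ with $\|\varphi_{n+1}-\varphi_n\|_p$ decaying rapidly, say $\le 2^{-n}$. For each difference $\psi_n=\varphi_{n+1}-\varphi_n$, I would replace it by a function $\Psi_n\in C_0^\infty$ which is a sum of many highly oscillatory, spatially separated rescaled copies of $\psi_n$, arranged so that $\|\Psi_n-\psi_n\|_p$ is still small (in fact one wants $\alpha$ of the resulting sequence to reproduce $f$) while $\|\Psi_n^{(j)}\|_p$ is as small as we like for $1\le j\le k$. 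The partial sums $F_N=\varphi_0+\sum_{n<N}\Psi_n$ then form a Cauchy sequence in $W^{k,p}$ because the $L^p$-parts telescope and the derivative parts are summably small by construction; this defines $\beta f=\{F_N\}$. Linearity is arranged by making the construction of $\Psi_n$ depend linearly on $\psi_n$ (the rescaling/translation operations are linear), and by a diagonal argument one fixes a single master sequence of smooth approximants depending linearly and continuously on $f$ — this is the standard device of choosing, once and for all, a continuous linear approximation operator $L^p\to C_0^\infty$ followed by the derivative-killing modification.

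From the construction the three required properties follow: $\alpha\circ\beta=\mathrm{id}$ holds because in $L^p$ the modifications $\Psi_n-\psi_n$ are arranged to go to zero, so $\lim F_N=f$ in $L^p$; $\delta\circ\beta=0$ holds because $\{F_N'\}$ is by design a null sequence in $W^{k-1,p}$ (all derivative quasinorms were made summably small); continuity of $\beta$ follows since $\|\beta f\|_{W^{k,p}}^p$ is controlled by $\|f\|_p^p$ plus a fixed summable series bounded in terms of $\|f\|_p$; and injectivity follows from $\alpha\circ\beta=\mathrm{id}$, since $\alpha$ is then a left inverse. Being a retraction means precisely $\alpha\circ\beta=\mathrm{id}$, which we have. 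I expect the main obstacle to be the quantitative derivative estimate: making precise the splitting of a bump into $\ell=\ell(n,k,\epsilon)$ disjoint rescaled translates and verifying that the $j$-th derivative quasinorms can be forced below a prescribed $\epsilon$ \emph{simultaneously for all} $1\le j\le k$ while keeping the $L^p$-error small — this requires choosing the scaling exponent carefully (the copies should be compressed horizontally by a factor $1/\ell$ but \emph{not} rescaled vertically, so the $L^p$-norm shrinks like $\ell^{-1/p}\cdot\ell^{1/p}$... ) and tracking how the constants interact, which is the technical heart Peetre's exposition apparently glossed over.
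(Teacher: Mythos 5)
First, note that the paper does not actually prove this lemma: it is stated as known and attributed to Peetre, so there is no in-paper argument to match your proposal against; it has to stand on its own. As written it has two genuine gaps. The first is the quantitative heart, which you yourself flag and then leave unresolved. Your dilation computation $\|g_\lambda^{(j)}\|_p^p=\lambda^{jp-1}\|g^{(j)}\|_p^p$ only helps when $p<1/j$, so it cannot treat all $1\le j\le k$ at once, and your substitute device --- splitting a bump into $\ell$ disjoint, horizontally compressed, not vertically rescaled copies --- makes the $j$-th derivative quasinorm grow like $\ell^{jp}$, i.e.\ it moves in the wrong direction (as you observe, before trailing off). The mechanism that actually works is the opposite one: keep the approximant close to $f$ in $L^p$ but concentrate its derivatives on very small sets. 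Approximate $f$ by a step function and smooth each jump of height $h$ over a scale $\epsilon_1$, then smooth the resulting corners at a much finer scale $\epsilon_2$, and so on down to $\epsilon_k$; then $\|\cdot\|_p^p$ of the $j$-th derivative is of order $|h|^p(\epsilon_1\cdots\epsilon_{j-1})^{-p}\epsilon_j^{1-p}$, which can be made arbitrarily small for every $j\le k$ by choosing $\epsilon_j$ successively small, precisely because $1-p>0$. Without some estimate of this kind you have not produced, for a single bump or step, any approximating sequence with all $k$ derivative quasinorms small, so the construction of $\beta$ never gets off the ground.

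The second gap is linearity and continuity. The ``standard device of choosing, once and for all, a continuous linear approximation operator $L^p\to C_0^\infty$'' is not available when $0<p<1$: every continuous linear map from $L^p$ into a locally convex space vanishes, and concretely both mollification and conditional expectation onto dyadic step functions are unbounded on $L^p$ (test on a tall spike on a set of measure $\delta\to0$). Consequently the sequence $\varphi_n$ and the modifications $\Psi_n$ in your construction involve choices, and $f\mapsto\{F_N\}$ is not linear as described; the diagonal ``master sequence'' does not exist by any standard smoothing device. The way to repair this (and, in substance, Peetre's route) is to make the definition choice-free: declare $\beta f$ to be the class in $W^{k,p}$ of \emph{any} sequence $g_n\in C_0^\infty$ with $g_n\to f$ in $L^p$ and $g_n^{(j)}\to0$ in $L^p$ for $1\le j\le k$. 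If two such sequences exist for the same $f$, their difference is a null sequence in the $W^{k,p}$ quasinorm, so the class is independent of the choice; hence the map is automatically linear on the set $D$ of admissible $f$, satisfies $\|\beta f\|_{W^{k,p}}=\|f\|_p$ (giving continuity and, together with $\alpha\circ\beta=\operatorname{id}$, injectivity), and $\delta\circ\beta=0$ by construction. One then checks that $D$ is closed under $L^p$-limits (a diagonal argument using $p$-subadditivity of $\|\cdot\|^p$) and contains all step functions by the jump-smoothing estimate above, so $D=L^p$. Your final paragraph asserts the right conclusions, but both the derivative-killing estimate and the linearity mechanism are exactly the points that need proof, and neither is supplied.
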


We will need the following lemma (see \cite[Lemma~2.1]{Peetre}). 
We recall that the space $C^{k}_{0,+}(I)$ is defined as the space of $k$ times piecewise differentiable functions with compact support; i.e.~$f\in C^{k}_{0,+}$ iff $f$ is compactly supported, has $k-1$ continuous derivatives and its $k$:th derivative, taken in the distributional sense, is piecewise continuous with finitely many discontinuities. A typical class of examples of such functions are the so-called spline functions from approximation theory, see e.g.~\cite{Ahlberg} for a treatment of these.

\begin{lemma}\label{splinedense}
The spaces $C_0^{\infty}$ and $C^{k}_{0,+}$ have the same completion $W^{k,p}$ in the quasi-norm \eqref{sobolevnorm}.
\end{lemma}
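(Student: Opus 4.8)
The plan is to prove the only non-trivial inclusion, namely that $C_0^\infty$ is dense in $C^k_{0,+}$ with respect to the quasi-norm \eqref{sobolevnorm}; since $C_0^\infty \subseteq C^k_{0,+} \subseteq W^{k,p}$, density of $C_0^\infty$ in $C^k_{0,+}$ forces the two abstract completions to coincide (density is transitive, and the completion is unique up to isometry). Here it is convenient to recall that for $0<p<1$ the quantity $\lVert f\rVert_{W^{k,p}}^p = \sum_{j=0}^k\lVert f^{(j)}\rVert_p^p$ is subadditive, so that it suffices to produce, for a given $f\in C^k_{0,+}$, a sequence $f_n\in C_0^\infty$ with $\sum_{j=0}^k\lVert f_n^{(j)}-f^{(j)}\rVert_p^p\to 0$.

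Fix $f\in C^k_{0,+}$, which we view as a compactly supported function on $\R$, and let $(\phi_\epsilon)_{\epsilon>0}$ be a standard approximate identity consisting of smooth, compactly supported functions. Put $f_\epsilon := f*\phi_\epsilon$. Then $f_\epsilon\in C_0^\infty(\R)$, and for $\epsilon$ small all the $f_\epsilon$ are supported in one fixed compact set. Because $f$ has $k-1$ continuous derivatives, its $k$-th distributional derivative is exactly the piecewise-continuous function $f^{(k)}$ obtained by classical differentiation on each piece --- no Dirac masses appear, precisely since $f^{(k-1)}$ is continuous across the finitely many breakpoints --- and consequently mollification commutes with differentiation up to order $k$: one has $f_\epsilon^{(j)}=f^{(j)}*\phi_\epsilon$ for $0\le j\le k$.

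It remains to show $f^{(j)}*\phi_\epsilon\to f^{(j)}$ in $L^p$ for each $j\le k$. For $j\le k-1$ the function $f^{(j)}$ is continuous with compact support, hence uniformly continuous, so $f^{(j)}*\phi_\epsilon\to f^{(j)}$ uniformly; since all these functions live in a common compact set, uniform convergence gives $L^p$ convergence. For $j=k$ the function $f^{(k)}$ is bounded, compactly supported, and continuous off a finite set, so $(f^{(k)}*\phi_\epsilon)(x)\to f^{(k)}(x)$ at every point of continuity of $f^{(k)}$, hence almost everywhere, while $\lvert f^{(k)}*\phi_\epsilon\rvert\le \lVert f^{(k)}\rVert_\infty\,\chi_{K}$ for a fixed compact set $K$; the dominated convergence theorem then yields convergence in $L^p$. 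Taking $\epsilon=1/n$ and summing the $p$-th powers over $j=0,\dots,k$ completes the proof.

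The one point requiring genuine care is the identification of the $k$-th distributional derivative of an element of $C^k_{0,+}$ with its piecewise classical derivative, so that mollification really stays inside $L^p$ and commutes with $\partial^k$. A secondary and smaller caveat is that for $0<p<1$ one cannot invoke Minkowski's integral inequality to pass to $L^p$-convergence of the mollifications directly; this is why the argument above routes through uniform bounds and the dominated convergence theorem instead.
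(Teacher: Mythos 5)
The paper itself does not prove this lemma: it is quoted verbatim from Peetre's article (his Lemma~2.1), so there is no in-house argument to measure yours against. Your mollification proof is correct and self-contained. You rightly isolate the only nontrivial direction --- density of $C_0^\infty$ in $C^{k}_{0,+}$, the reverse inclusion being set-theoretic --- and you put your finger on the two points that actually need care: first, that the $k$-th distributional derivative of an element of $C^{k}_{0,+}$ carries no Dirac masses precisely because $f^{(k-1)}$ is continuous across the finitely many breakpoints, so that $f_\epsilon^{(j)}=f^{(j)}*\phi_\epsilon$ for all $j\le k$ and the mollifications stay in $L^p$; and second, that for $0<p<1$ one cannot use a Minkowski-type integral inequality, so the $L^p$ convergence must be routed through uniform convergence on a fixed compact set (for $j\le k-1$) and through pointwise a.e.\ convergence plus a uniform bound and dominated convergence (for $j=k$), with the subadditivity of $\nl\cdot\nr_{W^{k,p}}^p$ tying the pieces together. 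The one cosmetic caveat is that the paper allows a proper interval $I\subseteq\R$; for bounded $I$ one should mollify a compactly supported representative on all of $\R$ and restrict back to $I$, which only decreases the quasi-norm, so the argument is unaffected.
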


The next step is to use the retraction $\beta$ from Lemma~\ref{retraction} to deduce a few facts regarding the mapping $\delta$. The most important is surjectivity. The proof of this fact is perhaps to be thought of as the main result in this note. To get the rather simple underlying idea across, we summarize what is about to happen. 

We consider an element $g=\{g_j\}\in W^{k-1,p}$ and want to find an element $f\in W^{k,p}$ so that $\delta f=g$. In essence, what we try to do is to find a primitive $f_j$ to each $g_j$ and consider $\{f_j\}$ as a $W^{k,p}$-function. To do this we are tempted to integrate. However, to make sure that the resulting function lies in $L^p$, we will need to modify $g_j$ so that it has mean zero. As soon as that has been accomplished, we consider primitives $u_j$. They will lie in $L^p$, but the second obstacle is that $\{u_j\}$ may very well diverge. Using the retraction $\beta$, we can find $L^p$-functions $\tilde{u}_j$ which approximate $u_j$ in $L^p$ and set $f_j=u_j-\tilde{u}_j$ so that convergence is no issue. Since $\delta\circ\beta=0$ this is accomplished without changing too much in the derivatives. 

If one for a moment considers $W^{k,p}$ as a direct sum, where the $n$:th component is the $n$:th derivative, one can view $f=\{f_j\}$ as $(0,g,g',\ldots, g^{(k-1)})$. Thus we actually find a retraction $\gamma$ of $\delta$ which will furthermore satisfy $\alpha\circ\gamma=0$.

\begin{lemma}\label{deltalemma}
The mapping $\delta$ is surjective as a mapping $W^{k,p}\to W^{k-1,p}$ for $k\geq1$ and $\ker \delta=\im \beta$. 
\end{lemma}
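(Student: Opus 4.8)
The plan is to prove the two assertions separately, beginning with the kernel computation since it is the easier of the two and sets up notation. For $\ker\delta = \im\beta$: the inclusion $\im\beta \subseteq \ker\delta$ is immediate from $\delta\circ\beta = 0$ in Lemma~\ref{retraction}. For the reverse inclusion, suppose $f = \{f_j\} \in W^{k,p}$ with $\delta f = 0$, i.e.~$\nl f_j'\nr_{W^{k-1,p}} \to 0$. I would then show $f$ is equivalent (in $W^{k,p}$) to $\beta(\alpha f)$. Writing $a = \alpha f = \lim f_j$ in $L^p$, the claim is that $\nl f_j - \beta_k(a)_j \nr_{W^{k,p}} \to 0$ along some representative $\{\beta_k(a)_j\}$ of $\beta a$; since $\beta a$ has all derivative-components vanishing and $f_j' \to 0$ in all lower derivatives, the only surviving term in the $W^{k,p}$ quasinorm is $\nl f_j - \beta_k(a)_j\nr_p$, which tends to $0$ because both converge to $a$ in $L^p$. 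So $f = \beta(\alpha f) \in \im\beta$.

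The core of the lemma is surjectivity of $\delta$, and here I would follow exactly the roadmap sketched in the paragraph preceding the statement. Given $g = \{g_j\} \in W^{k-1,p}$, by Lemma~\ref{splinedense} I may assume each $g_j \in C^{k-1}_{0,+}(I)$, all supported in a common compact subinterval (or at least with supports controlled along the sequence). The first modification: replace $g_j$ by $h_j = g_j - \varphi_j$ where $\varphi_j$ is a fixed smooth (or spline) bump of unit mass scaled so that $\int \varphi_j = \int g_j =: c_j$, chosen so that $\|\varphi_j\|_{W^{k-1,p}} \to 0$ when $c_j \to 0$ — one needs $c_j = \int g_j \to 0$, which follows since $\{g_j\}$ is Cauchy in $L^p$ and $\ldots$ here a short argument is needed that the means converge (e.g.~if all supports lie in a fixed compact set $K$, then $|c_j - c_i| \le |K|^{1-1/p}\,\|g_j - g_i\|_p^{?}$ fails for $p<1$, so instead one fixes the means to a common value using that $\int g_j$ need only be \emph{corrected}, not shown to converge — more carefully, one can subtract a bump carrying exactly $\int g_j$, so $\int h_j = 0$ exactly, and then control $\|\varphi_j - \varphi_i\|_{W^{k-1,p}}$ via $|c_j - c_i|$, which does go to zero because a Cauchy sequence of compactly-supported functions in $W^{k-1,p}$ has Cauchy means when supports are uniformly bounded). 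Having arranged $\int h_j = 0$, set $u_j(x) = \int_{-\infty}^x h_j(t)\dd t$; then $u_j \in C^{k}_{0,+}(I)$, $u_j' = h_j$, and $u_j$ is compactly supported, hence in $L^p$.

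The second obstacle is that $\{u_j\}$ need not be Cauchy in $L^p$, so $\{u_j\}$ need not define an element of $W^{k,p}$. Here the retraction $\beta_k$ is used: pick, for each $j$, an element $\tilde u_j \in C_0^\infty$ (a representative entry of $\beta_k(u_j)$, or rather an $L^p$-approximant supplied by density) with $\|u_j - \tilde u_j\|_p$ small, say $\le 2^{-j}$, and moreover arranged so that the double sequence assembles into a genuine $W^{k,p}$-Cauchy sequence representing $\beta_k(v)$ for suitable $v$; the key point is that $\delta\beta_k = 0$, so the $\tilde u_j$ contribute nothing in the derivative components in the limit. Set $f_j = u_j - \tilde u_j$. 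Then $f_j' = h_j - \tilde u_j'$, and one checks $\{f_j\}$ is Cauchy in $W^{k,p}$: in the zeroth component $\|f_j - f_i\|_p \le \|\tilde u_j - \tilde u_i\|_p + 2^{-j} + 2^{-i}$ is controlled because $\{\tilde u_j\}$ is chosen $L^p$-Cauchy, while in every higher derivative component $\{f_j^{(m)}\} = \{h_j^{(m-1)} - \tilde u_j^{(m)}\}$, and since the $\tilde u_j$ pieces die in $W^{k-1,p}$ after applying $\delta$, the dominant behaviour is that of $\{h_j\} \sim \{g_j\}$, which is Cauchy in $W^{k-1,p}$. Finally $\delta f = \{f_j'\} = \{h_j - \tilde u_j'\}$, and since $\{\tilde u_j'\} \to 0$ in $W^{k-1,p}$ (this is precisely $\delta\circ\beta_k = 0$) and $\{h_j\}$ is equivalent to $\{g_j\}$ in $W^{k-1,p}$ (the bumps $\varphi_j$ having vanishing $W^{k-1,p}$-norm in the limit), we conclude $\delta f = g$. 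Defining $\gamma g := f$ gives a (linear, continuous) retraction of $\delta$ with $\alpha\circ\gamma = 0$, since $\alpha f = \lim(u_j - \tilde u_j) = 0$ in $L^p$ by construction.

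The main obstacle, I expect, is the bookkeeping in the second step: one must choose the approximants $\tilde u_j$ not merely pointwise-in-$j$ but coherently, so that $\{\tilde u_j\}$ is itself a legitimate fundamental sequence (Cauchy in $W^{k,p}$) and so that the resulting $\{f_j\}$ is Cauchy in $W^{k,p}$ rather than just having each fixed derivative-component Cauchy — the quasi-triangle inequality for $p<1$ (with its constant $2^{1/p-1}$) must be tracked, and one must ensure the error terms form a summable, not merely null, sequence. A secondary subtlety is justifying that the means $\int g_j$ can be corrected away with asymptotically vanishing $W^{k-1,p}$-cost, which relies on a uniform support bound that should be extracted from Lemma~\ref{splinedense} and the Cauchy property; this is where one genuinely uses that we are on an interval and working with compactly supported test functions. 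The continuity and linearity of $\gamma$ then follow by inspecting the construction, which can be made manifestly linear in $g$.
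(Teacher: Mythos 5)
Your overall architecture matches the paper's: kernel identity from $\delta\circ\beta=0$ plus injectivity of $\delta$ on $\ker\alpha$, then surjectivity by correcting $g_j$ to mean zero, integrating, and subtracting a $\beta$-approximant so that the primitives do not diverge. The kernel half and the second half of the surjectivity argument are essentially sound (your worry about choosing the $\tilde u_j$ ``coherently'' is a non-issue: once $\nl u_j-\tilde u_j\nr_p\leq 2^{-j}$ you have $\nl f_j-f_i\nr_p^p\leq 2^{-jp}+2^{-ip}$ directly from $p$-subadditivity, so $\{\tilde u_j\}$ need not itself be Cauchy; a diagonal choice with $\max\{\nl u_j-\tilde u_j\nr_p,\nl \tilde u_j'\nr_{W^{k-1,p}}\\}<j^{-1}$ suffices).

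The genuine gap is in the mean-zero correction, exactly where your parenthetical trails off. Your plan requires $\nl\varphi_j\nr_{W^{k-1,p}}\to0$ so that $\{h_j\}$ still represents $g$, and with a \emph{fixed} bump scaled to carry mass $c_j=\int g_j$ this forces $c_j\to0$; your fallback claim that a $W^{k-1,p}$-Cauchy sequence with uniformly bounded supports has Cauchy means is false for $0<p<1$. For instance $g_j=j^2\chi_{[0,j^{-2}]}$ (smoothed) tends to $0$ in $L^p$ for every $p<1$ yet has $\int g_j=1$ for all $j$; this is the same pathology that underlies the Douady example, and it shows the functional $g\mapsto\int g$ is not continuous on $L^p$, so no choice of representative rescues the argument. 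Moreover, even Cauchy means would only make $\{\varphi_j\}$ Cauchy, not null, which is what equivalence of $\{h_j\}$ and $\{g_j\}$ demands. The correct resolution — and the step that genuinely uses $p<1$ — is to let the bump vary with $j$: one can construct compactly supported $\psi_j$ with $\int\psi_j=1$ and $\nl\psi_j\nr_{W^{k-1,p}}$ as small as one pleases (this is \cite[Lemma~3.7]{BehmWennman}), and then choose $\psi_j$ so small that $\nl g_j\nr_1\nl\psi_j\nr_{W^{k-1,p}}\to0$. Setting $\tilde g_j=g_j-(\int g_j)\psi_j$ then gives exact mean zero with $\nl g_j-\tilde g_j\nr_{W^{k-1,p}}\to0$ regardless of whether the means converge. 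Without this ingredient the surjectivity proof does not close.
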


\begin{proof}
We begin reproducing Peetre's proof that $\ker \delta=\im \beta$.
One direction is obvious since $\delta\circ\beta=0$. For the other, note that $\delta$ is injective on $\ker \alpha$. 
Indeed, we can calculate the norm
$$
\nl f\nr_{W^{k,p}}^{p}=\nl \alpha f\nr_{p}^{p}+\nl \delta f\nr_{W^{k-1,p}}^{p}.
$$
This gives that for $\delta f$ to be $0$ if $\alpha f=0$, we will need $\nl f\nr_{W^{k,p}}$ to be zero. 
Thus $f=0$. 
Now let $f\in \ker \delta$ and put $f=f_1+f_2$ with $f_1=\beta\circ \alpha f$. 
Clearily $f_1$ lies in $\im \beta$ and $f_2$ in $\ker \alpha$. 
Thus $\delta f=\delta f_2$. However, $\delta f=0$ implies $f_2=0$ by injectivity of $\delta_{\vert \ker \alpha}$ and therefore $f=f_1\in \im \beta$.

The second part of the proof is, as mentioned, trickier. First we note that a general $g\in W^{k-1,p}$ can be seen as a Cauchy sequence $\{g_j\}$ of functions in $C^{k-1}_{0,+}$ by Lemma~\ref{splinedense}. Next we consider another sequence $\{\tilde{g}_j\}$ that is a slight modification that represents the same element in $W^{k-1,p}$ while each each $\tilde{g}_j$ has mean $0$, so that we can define
$$
u_j(x)=\int_{-\infty}^{x}\tilde{g}_j(t)\dd t,\quad j\geq1
$$
without fear of $u_j$ leaving $L^{p}$. 
The idea is then to use the map $\beta$ to lift $u_j$ to a function $\tilde{u}_j$ such that $\{\tilde{u}_j\}$ approximates $u$ but with $\{\tilde{u}'_j\}=0$ in $W^{k-1,p}$. 
Lastly we set $f_j=u_j-\tilde{u}_j$ and set off trying to prove that $f:=\{f_j\}$ is actually an element of $W^{k,p}$ such that $\delta f=g$.

To make this precise we first show that one can construct $\{\tilde{g}_j\}$ as above with mean zero. 

We define $\{\tilde{g}_j\}$ by
$$
\tilde{g}_j=g_j-\left(\int_{\R} g_j(t)\dd t\right)\psi_j,\quad j\geq1
$$
where $\psi_j$  are compactly supported, $\int \psi_j=1$ and satisfy $\nl g_j\nr_1\nl \psi_j\nr_{W^{k-1,p}}\to0$. That such functions $\psi_j$ can be constructed is a typical feature arising when $0<p<1$. Too see why it works, apply for example \cite[Lemma~3.7]{BehmWennman} for $M_n=1$ for $n\geq0$.

To see that $\{\tilde{g}_j\}$ represents the same element in $W^{k-1,p}$ as $\{g_j\}$, we just note that
$$
\nl g_j-\tilde{g}_j\nr_{W^{k-1,p}}=\nl\int_{\R}g_j(t)\dd t\cdot \psi_j\nr_{W^{k-1,p}}\leq \nl g_j\nr_1 \nl \psi_j\nr_{W^{k-1,p}}\to0
$$
by construction.

We move on to define $u=\{u_j\}$ by
$$
u_j(x)=\int_{-\infty}^{x}\tilde{g}_j(t)\dd t,\quad j\geq 1.
$$ 
It is clear that $u_j\in L^p$, so we can use the retraction $\beta$ to obtain a sequence $\{v_{j,n}\}_{n\geq1}$ such that $v_{j,n}\to u_j$ in $L^{p}$ and $v_{j,n}'\to0$ in $W^{k-1,p}$ for each fixed $j$, as $n\to\infty$. 
For each $j$ we let $n(j)$ be large enough for the following to hold
$$
\max\left\{\nl u_j-v_{j,n(j)}\nr_{p}, \nl v_{j,n(j)}'\nr_{W^{k-1,p}}\right\}< j^{-1}.
$$
Put $\tilde{u}_j=v_{j,n(j)}$ and set $f_j=u_j-\tilde{u}_j$. Recall that our goal is to show that $f:=\{f_j\}_{j\geq1}\in W^{k,p}$ and that $\delta f=g$. 

That $f$ lies in $W^{k,p}$ follows from the identity
$$
\nl f_i-f_j\nr_{W^{k,p}}^{p}=\nl f_i-f_j\nr_{p}^{p}+\nl f_{i}'-f_{j}'\nr_{W^{k-1,p}}^{p}.
$$
Indeed, the first part tends to zero since already $f_j$ tends to zero $L^{p}$ by construction (if $\gamma g:=f$, this shows that $\alpha \circ \gamma (g)=0$). The second part can be handled since
$$
\nl f_i'-f_j'\nr_{W^{k-1,p}}^{p}\leq \nl \tilde{g}_i-\tilde{g}_j\nr_{W^{k-1,p}}^{p}+\nl \tilde{u}_i'\nr_{W^{k-1,p}}^{p}+\nl \tilde{u}_j' \nr_{W^{k-1,p}}^{p}.
$$
The first term tends to zero since $\{\tilde{g}_j\}$ is an element of $W^{k-1,p}$, and the last two terms tends to zero since $\nl \tilde{u}_{j}'\nr_{W^{k-1,p}}<j^{-1}$.

It is now within reach to see that $\delta f=g$. 
Indeed, we could take $\{\tilde{g}_j\}$ as a representative of $g$, and
$$
\nl \tilde{g}_j-f_j'\nr_{W^{k-1,p}}^{p}=\nl \tilde{g}_j-\tilde{g}_j+\tilde{u}_{j}'\nr_{W^{k-1,p}}^{p}\to0, \quad j\to\infty
$$
so $\delta f=\{f_j'\}=\{\tilde{g}_j\}=g$.
\end{proof}

\begin{remark}
The above proof works for $W^{k,p}$ for all $k\geq1$. 
In the case $k=1$ we can realize the target space $W^{0,p}$ of $\delta$ as a space of functions without any difficulty; it is actually $L^{p}$. 
With this knowledge, we single out dense set $\Sigma\subset L^{p}$ and construct for each $g\in \Sigma$ an element $f\in W^{1,p}$ which maps to $g$. 
This should be possible to do rather easily when $\Sigma$ denotes the set of step functions by a method very similar to that used to construct the Douady counter-example by using skewed saw tooth-like functions which have very small quasi-norm while having derivative $1$ on a large portion of an interval.

It turns out that it suffices to prove that $\delta$ has dense range. 
Indeed, we know from general theory that an operator which is bounded from below must have closed range, see Proposition~\ref{range} below. 
Note that it is enough to consider $\delta$ on $\ker \alpha$. This follows since $\delta f =\delta f-\delta \beta\circ\alpha f$ by the fact that $\delta\circ\beta=0$ and therefore if $f\notin \ker\alpha$ maps to $g$, so will $f-\beta\circ\alpha f$ which lies in $\ker\alpha$. 

Now, $\delta_{\vert \ker \alpha}$ is bounded from below, so it would follow that if it has dense range, it must be a surjection. 
To see why it is bounded from below, let $f\in \ker \alpha$. 
Then
$$
\nl \delta f\nr_{W^{0,p}}^{p}=\nl \alpha f\nr_{p}^{p}+\nl \delta f\nr_{W^{0,p}}^{p}=\nl f\nr_{W^{1,p}}.
$$

We have actually cheated a bit. 
We appeal to the general theory of Banach spaces when saying that if an operator has dense range and is bounded from below, then it is surjective, but our setting is that of quasi-Banach spaces.
However, the result is exactly the same in the case at hand.

\begin{proposition}\label{range} 
Suppose $X$ and $Y$ are quasi-Banach spaces and $T:X\to Y$ is a bounded operator. 
Then if $T$ is bounded from below, the range of $T$ is closed.  
\end{proposition}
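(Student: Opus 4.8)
The plan is to run the classical Banach-space argument verbatim and to check that the weaker quasi-triangle inequality never actually intervenes. First I would unwind the hypotheses. That $T$ is bounded from below means there is a constant $c>0$ with $\nl Tx\nr_Y\geq c\,\nl x\nr_X$ for all $x\in X$; that $T$ is bounded means $\nl Tx\nr_Y\leq C\,\nl x\nr_X$ for some finite $C$. These are the only two facts about $T$ that will be used.

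Next I would take an arbitrary point $y$ in the closure of $\im T$ and choose $x_n\in X$ with $y_n:=Tx_n\to y$ in $Y$. Since $(y_n)$ converges it is Cauchy — this is the one place a quasi-triangle inequality $\nl y_n-y_m\nr_Y\leq K\big(\nl y_n-y\nr_Y+\nl y-y_m\nr_Y\big)$ is invoked, but the constant $K\geq1$ is completely harmless here. Then, applying the lower bound to the single vector $x_n-x_m$, one gets $\nl x_n-x_m\nr_X\leq c^{-1}\nl y_n-y_m\nr_Y$, so $(x_n)$ is Cauchy in $X$; observe that this estimate is applied directly to a difference of vectors, so no triangle inequality enters at all. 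By completeness of the quasi-Banach space $X$, $x_n\to x$ for some $x\in X$, and by continuity of $T$ (its boundedness, applied again to a difference) $Tx_n\to Tx$. Uniqueness of limits forces $y=Tx\in\im T$, so $\im T$ is closed.

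The only point that genuinely needs comment is whether passing from a norm to a bona fide quasi-norm invalidates the notions of Cauchy sequence, completeness, or continuity used above; it does not, so there is in fact no substantive obstacle. If one preferred an airtight reduction to the Banach-space statement, one could first invoke the Aoki--Rolewicz theorem to replace the quasi-norm by an equivalent $p$-subadditive (hence metric-generating) one and then quote the classical result; but I would regard this as unnecessary overhead, precisely because every inequality in the argument is applied to a single difference of vectors, so the failure of the exact triangle inequality is never felt. In short, the proposition holds for exactly the same reason, and by exactly the same proof, as in the normed case.
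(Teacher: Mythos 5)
Your proof is correct and is essentially the same as the paper's: use the lower bound on the difference $x_n-x_m$ to transfer the Cauchy property from $(Tx_n)$ to $(x_n)$, invoke completeness of $X$, and conclude by boundedness of $T$. Your added remarks on why the quasi-triangle inequality is harmless (and the optional Aoki--Rolewicz reduction) are accurate but not needed beyond what the paper already does.
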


\begin{proof}
Suppose that $T$ is bounded from below. 
To see that it has closed range, suppose that $Tx_n\to y$ in the range $R(T)\subset Y$. 
But then we get
$$
\nl Tx_n-Tx_m\nr_Y\geq C \nl x_n-x_m\nr_X,
$$
for some constant $C$, so $x_n$ tends to some element $x\in X$. 
By boundedness of $T$, we arrive at 
\begin{equation*}
Tx=\lim Tx_n = y. \qedhere
\end{equation*}
\end{proof}
\end{remark}

We are finally able to give the last details of the proof of Theorem~\ref{Peetre-main}.

\begin{proof}[Proof of Theorem~\ref{Peetre-main}]
We shall explicitly give the isomorphism. 
It will be given by
$$
f\mapsto (\alpha f, \delta f),\quad f\in W^{k,p}.
$$
It is readily verified that it is continuous, linear and maps $W^{k,p}$ into the space $L^p \oplus W^{k-1,p}$. 

Injectivity poses no difficulty. 
Indeed, suppose that $f$ maps to $0$. In particular this implies that $\delta f=0$, so there exists a $g$ with $\beta g=f$ since $\ker \delta=\im \beta$. 
But then 
$$
0=\alpha f= \alpha\circ  \beta g = g,
$$
so $f=\beta g=0$.

To finish the proof, we need to show that the mapping defined above is also surjective. 
We thus pick a pair $(g,h)\in L^{p}\oplus W^{k-1,p}$ and try to find an $f\in W^{k,p}$ that maps to this element. 
We do this in two steps. 
First, choose a function $f_0$ such that $\delta f_0=h$. Then set $f=f_0-\beta(\alpha f_0-g)$. 
Since $\delta\circ \beta=0$ it follows that 
$$
\delta f=\delta f_0-\delta\circ\beta( \alpha f_0- g) =\delta f_0=h.
$$ 
Now, since $\alpha\circ\beta=\id$ we get
$$
\alpha f=\alpha(f_0-\beta \alpha f_0+\beta g)=f_0-\alpha\circ \beta (\alpha f_0- g)=\alpha (f_0-f_0)+g=g,
$$
and the proof is complete.
\end{proof}

%
\bibliography{References}{}
\bibliographystyle{amsplain}

\end{document}